\newtheorem{theorem}{Theorem}[section]
\newtheorem{lemma}[theorem]{Lemma}
\newtheorem{corollary}[theorem]{Corollary}
\theoremstyle{definition}
\theoremstyle{remark}
\numberwithin{equation}{section}
\begin{document}

\title{Noncommutative Blackwell--Ross martingale inequality}
\author[A. Talebi, M.S. Moslehian, Gh. Sadeghi ]{Ali Talebi$^1$, Mohammad Sal Moslehian $^1$ and Ghadir Sadeghi$^2$ }

\address{$^1$ Department of Pure Mathematics, Center of Excellence in Analysis on Algebraic Structures (CEAAS), Ferdowsi University of
Mashhad, P.O. Box 1159, Mashhad 91775, Iran}
\email{alitalebimath@yahoo.com}
\email{moslehian@um.ac.ir, moslehian@member.ams.org}

\address{$^2$ Department of Mathematics and Computer Sciences, Hakim Sabzevari University, P.O. Box 397, Sabzevar, Iran}
\email{ghadir54@gmail.com, g.sadeghi@hsu.ac.ir}

\subjclass[2010]{Primary 46L53; Secondary 46L10, 47A30.}
\keywords{Noncommutative probability space; trace; Blackwell--Ross martingale inequality; martingale.}

\begin{abstract}
We establish a noncommutative Blackwell--Ross inequality for supermartingales under a suitable condition which generalize Khan's works to the noncommutative setting. We then employ it to deduce an Azuma-type inequality.
\end{abstract}

\maketitle

\section{Introduction}

Blackwell \cite{BLA} showed that if $\{X_n, X_0=0, n \geq 0\}$ is a martingale such that $|X_n-X_{n-1}|\leq \alpha$ for all $n$, then for each positive number $c$,
$${\rm Prob}(X_n\geq mc {\rm ~for~some~} n\geq m) \leq \exp\left(\frac{-mc^2}{2\alpha^2}\right),$$
which gives a generalization of a result of Hoeffding \cite{HOE}.  Ross \cite{ROS} extended Blakwell's result to the case where the bound on the martingale difference is not symmetric. Indeed, Ross employed a supermartingale argument to show that the same is true when $-\alpha\leq X_n-X_{n-1} \leq \beta$, where $\alpha, \beta >0$. Khan \cite{KHA} generalized Blackwell--Ross inequality for martingales (supermartingales) under a subnormal structure on the conditional moment generating function $\varphi_n(\theta)=\mathbb{E}(\exp(\theta X_n)|\mathcal{F}_{n-1})$ subject to some mild conditions.

In this paper, we adopt the classical ideas in probability theory and the Golden--Thompson inequality to establish a Blackwell--Ross martingale inequality under a non-symmetric bound on the martingales differences in the framework of noncommutative probability spaces.

A von Neumann algebra $\mathfrak{M}$ on a Hilbert space $\mathfrak{H}$ with unit element $1$ equipped with a normal faithful tracial state $\tau:\mathfrak{M}\to \mathbb{C}$ is called a noncommutative probability space. We denote by $\leq$ the usual order on the self-adjoint part $\mathfrak{M}^{sa}$ of $\mathfrak{M}$.
For each self-adjoint operator $x\in \mathfrak{M}$, there exists a unique spectral measure $E$ as a $\sigma$-additive mapping with respect to the strong operator topology from the Borel $\sigma$-algebra $\mathcal{B}(\mathbb{R})$ of $\mathbb{R}$ into the set of all orthogonal projections such that for every Borel function $f: \sigma(x)\to \mathbb{C}$ the operator $f(x)$ is defined by $f(x)=\int f(\lambda)dE(\lambda)$, in particular, $\textbf{1}_B(x)=\int_BdE(\lambda)=E(B)$.

The celebrated Golden--Thompson inequality \cite{RUS} states that for any self-adjoint elements $y_1, y_2$ in a noncommutative probability space $ \mathfrak{M}$, the inequality
\begin{eqnarray}\label{T2}
\tau(e^{y_1+y_2})\leq \tau(e^{y_1} e^{y_2})
\end{eqnarray}
holds; see also \cite{SUK} for some Golden--Thompson type inequalities.

For $p\geq1$, the noncommutative $L^P$-space $L^p(\mathfrak{M})$ is defined as the completion of $\mathfrak{M}$ with respect to the $L^p$-norm $\|x\|_p:=\left(\tau(|x|^p)\right)^{1/p}$. The commutative cases of discussed spaces are usual $L^p$-spaces. For further information we refer the reader to \cite{P} and references therein.\\
Let $\mathfrak{N}$ be a von Neumann subalgebra of $\mathfrak{M}$. Then there exists a normal positive contractive projection   $\mathcal{E}_{\mathfrak{N}}:\mathfrak{M}\to\mathfrak{N}$ satisfying the following properties:\\
(i) $\mathcal{E}_{\mathfrak{N}}(axb) =a\mathcal{E}_{\mathfrak{N}}(x)b$ for any $x\in\mathfrak{M}$ and $a, b\in\mathfrak{N}$;\\
(ii) $\tau\circ\mathcal{E}_{\mathfrak{N}}=\tau$.\\
Moreover, $\mathcal{E}_{\mathfrak{N}}$ is the unique mapping satisfying (i) and (ii). The mapping $\mathcal{E}_{\mathfrak{N}}$ is called the conditional expectation of $\mathfrak{M}$ with respect to $\mathfrak{N}$.

Let $\mathfrak{N}\subseteq \mathfrak{A}_j\,\,(1\leq j\leq n)$ be von Neumann subalgebras of $\mathfrak{M}$.
We say that the $\mathfrak{A}_j$ are order independent over $\mathfrak{N}$ if for every $2\leq j\leq n$, the equality $$\mathcal{E}_{j-1}(x)=\mathcal{E}_{\mathfrak{N}}(x)$$ holds for all $x\in\mathfrak{A}_j$, where $\mathcal{E}_{j-1}$ is the conditional expectation of $\mathfrak{M}$ with respect to the von Neumann subalgebra generated by $\mathfrak{A}_1,\ldots,\mathfrak{A}_{j-1}$; cf. \cite{JX}. Note that this notion of independence implies that $\mathfrak{N}$ should be the intersection of all $\mathfrak{A}_j$. In fact, if $x \in \mathfrak{A}_{j-1} \cap \mathfrak{A}_j$, then
\begin{align*}
x = \mathcal{E}_{j-1}(x) = \mathcal{E}_{\mathfrak{N}}(x) \in \mathfrak{N}.
\end{align*}

A filtration of $\mathfrak{M}$ is an increasing sequence $(\mathfrak{M}_j, \mathcal{E}_j)_{0\leq j\leq n}$ of von Neumann subalgebras of $\mathfrak{M}$ together with the conditional expectations $ \mathcal{E}_j$ of $\mathfrak{M}$ with respect to $\mathfrak{M}_j$ such that $\bigcup_j\mathfrak{M}_j$ is $w^*$--dense in $\mathfrak{M}$. It follows from $\mathfrak{M}_j\subseteq \mathfrak{M}_{j+1}$ that
\begin{eqnarray}\label{maral}
\mathcal{E}_i\circ\mathcal{E}_j=\mathcal{E}_j\circ\mathcal{E}_i=\mathcal{E}_{\min\{i,j\}}\,
\end{eqnarray}
for all $i,j\geq 0$. Generally, a sequence $(x_j)_{j\geq0}$ in $L^1(\mathfrak{M})$ is called a martingale (supermartingale, resp.) with respect to the filtration $(\mathfrak{M}_j)_{0\leq j\leq n}$ if $x_j \in L^1(\mathfrak{M}_j)$ and $\mathcal{E}_j(x_{j+1})=x_j$ ($\mathcal{E}_j(x_{j+1})\leq x_j$, resp.) for every $j\geq 0$. It follows from \eqref{maral} that $\mathcal{E}_j(x_i)=x_j$ for all $i\geq j$. Put $dx_j=x_j-x_{j-1}\,\,(j\geq 0)$ with the convention that $x_{-1}=0$. Then $dx=(dx_j)_{j\geq 0}$ is called the martingale difference of $(x_j)$. The reader is referred to \cite{Xu2} for more information.


\section{Main Results}

In this section, we provide a noncommutative Blackwell--Ross inequality. To this end, we will need the following lemma which was proved by Alon, et al. \cite{PE}.
\begin{lemma}\label{IPE}
For $0\leq\lambda\leq1$
\begin{eqnarray*}
\lambda e^{(1-\lambda)x}+(1-\lambda)e^{-\lambda x}\leq e^{\frac{x^2}{8}}\,.
\end{eqnarray*}
\end{lemma}
We are inspired by some ideas in the commutative case, e.g. \cite{KHA}, to provide our main result.

\begin{theorem}\label{main2}
Let $\left\{s_n=\sum_{i=1}^nx_n,~~ n\geq0\right\}$ be a self-adjoint supermartingale in $\mathfrak{M}$ with respect to a filtration $(\mathfrak{M}_n,\mathcal{E}_n)_{n\geq0}$ such that
\begin{eqnarray}\label{eq31}
\mathcal{E}_{n-1}(e^{t x_n})\leq f(t)\leq e^{-\gamma t+\lambda t^2}, \quad (\lambda>0, \gamma\geq0, t>0)
\end{eqnarray}
where $f(t)$ is a continuous positive function on $[0,\infty)$. Then for positive numbers $a$ and $b$, there exists $i\geq 1$ such that for any positive integer $m$
\begin{eqnarray}\label{eq32}
\tau\left( \vee_{n=m+i}^\infty \textbf{1}_{(a+bn, \infty)} (s_n) \right) \leq A^m e^{\frac{-a(b+\gamma)}{\lambda}},
\end{eqnarray}
where $A=e^{-bt_0}f(t_0) \leq 1$ and $t_0=\frac{b+\gamma}{\lambda}$. Moreover,
\begin{eqnarray}\label{eq33}
\tau\left( \vee_{n=m+j}^\infty \textbf{1}_{(bn, \infty)} (s_n) \right) \leq A_0^m e^{\frac{-m(b+\gamma)^2}{4\lambda}}
\end{eqnarray}
for some $j \geq 1$, where $A_0 =e^{-\frac{1}{2}(b-\gamma)t_0}f(t_0)$ and $t_0=\frac{b+ \gamma}{2\lambda}$.
\end{theorem}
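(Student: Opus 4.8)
The plan is to run the classical Chernoff/exponential-supermartingale argument, using the Golden--Thompson inequality \eqref{T2} to cope with the fact that $s_{n-1}$ and $x_n$ need not commute, and the normality of $\tau$ to pass from finite joins of spectral projections to the infinite ones. Only the self-adjointness of the $s_n$ and the bound \eqref{eq31} will actually be used (the supermartingale property is essentially forced by differentiating \eqref{eq31} at $t=0$).

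\emph{Step 1: a uniform bound for $\tau(e^{ts_n})$.} Fix $t>0$. Since $s_n=s_{n-1}+x_n$ with $s_{n-1}\in\mathfrak{M}_{n-1}$ self-adjoint, \eqref{T2} gives $\tau(e^{ts_n})\le\tau(e^{ts_{n-1}}e^{tx_n})$. As $e^{ts_{n-1}}\in\mathfrak{M}_{n-1}$, properties (i) and (ii) of the conditional expectation yield $\tau(e^{ts_{n-1}}e^{tx_n})=\tau\bigl(e^{ts_{n-1}}\mathcal{E}_{n-1}(e^{tx_n})\bigr)$, and since $e^{ts_{n-1}}\ge0$ and $0\le\mathcal{E}_{n-1}(e^{tx_n})\le f(t)\mathbf{1}$ by \eqref{eq31}, the trace identity $\tau(a^{1/2}ba^{1/2})=\tau(ab)$ shows this is at most $f(t)\,\tau(e^{ts_{n-1}})$. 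Iterating down to $s_0=0$ gives $\tau(e^{ts_n})\le f(t)^n$ for every $n$.

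\emph{Step 2: the maximal estimate.} Put $e_n:=\mathbf{1}_{(a+bn,\infty)}(s_n)$. By the spectral theorem for the single self-adjoint operator $s_n$ and since $t>0$, one has $e_n e^{ts_n}e_n\ge e^{t(a+bn)}e_n$; combining this with $0\le e_n\le\mathbf{1}$, positivity of $e^{ts_n}$, and Step 1,
\[
e^{t(a+bn)}\tau(e_n)\le\tau(e^{ts_n}e_n)\le\tau(e^{ts_n})\le f(t)^n ,
\]
so $\tau(e_n)\le e^{-ta}\bigl(e^{-tb}f(t)\bigr)^n$. Taking $t=t_0=\tfrac{b+\gamma}{\lambda}$ and using \eqref{eq31}, the choice of $t_0$ forces $A:=e^{-bt_0}f(t_0)\le e^{-(b+\gamma)t_0+\lambda t_0^2}=1$ and $e^{-t_0a}=e^{-a(b+\gamma)/\lambda}$, hence $\tau(e_n)\le A^n e^{-a(b+\gamma)/\lambda}$. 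Since $\bigvee_{n=m+i}^{N}e_n\uparrow\bigvee_{n=m+i}^{\infty}e_n$ strongly and $\tau$ is normal, subadditivity of the trace on projections gives
\[
\tau\Bigl(\bigvee_{n=m+i}^{\infty}e_n\Bigr)\le\sum_{n=m+i}^{\infty}\tau(e_n)\le e^{-a(b+\gamma)/\lambda}\sum_{n=m+i}^{\infty}A^n=e^{-a(b+\gamma)/\lambda}\,\frac{A^{m+i}}{1-A},
\]
and choosing $i\ge1$ with $A^i\le 1-A$ yields \eqref{eq32}. For \eqref{eq33} one repeats the argument with $a=0$ but keeps $t$ free and minimizes $e^{-tb}f(t)\le e^{-(b+\gamma)t+\lambda t^2}$; the minimizer $t_0=\tfrac{b+\gamma}{2\lambda}$ gives $e^{-t_0 b}f(t_0)\le e^{-(b+\gamma)^2/(4\lambda)}$, and the elementary identity $e^{-t_0 b}=e^{-\frac12(b-\gamma)t_0}e^{-\frac12(b+\gamma)t_0}$ with $e^{-\frac12(b+\gamma)t_0}=e^{-(b+\gamma)^2/(4\lambda)}$ rewrites $\bigl(e^{-t_0 b}f(t_0)\bigr)^m$ as $A_0^m e^{-m(b+\gamma)^2/(4\lambda)}$; the same geometric-series step, with $j$ chosen so that $\bigl(e^{-t_0 b}f(t_0)\bigr)^j\le 1-e^{-t_0 b}f(t_0)$, gives \eqref{eq33}.

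The main obstacle is Step 1: in the commutative proof one simply factors $e^{ts_n}=e^{ts_{n-1}}e^{tx_n}$ before conditioning, whereas here that equality fails and one must first separate the two exponentials by Golden--Thompson and only then invoke \eqref{eq31}. The remaining ingredients — the spectral lower bound for $e^{ts_n}$ on the range of $e_n$, the inequality $\tau(xe_n)\le\tau(x)$ for $x\ge0$, and normality of $\tau$ — are routine. One point to make explicit is that the tail sum converges only when $A<1$ (equivalently $e^{-t_0b}f(t_0)<1$), which is precisely what necessitates the quantifier ``there exists $i$'' (resp.\ $j$); for \eqref{eq33} strict inequality is automatic from $e^{-t_0b}f(t_0)\le e^{-(b+\gamma)^2/(4\lambda)}<1$, whereas for \eqref{eq32} the borderline case $A=1$ must be excluded.
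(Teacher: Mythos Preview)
Your argument is correct and follows the same overall strategy as the paper: use Golden--Thompson to separate $e^{ts_{n-1}}$ from $e^{tx_n}$, apply the conditional bound \eqref{eq31} to get a geometric decay $\tau(e^{ts_n})\le f(t)^n$, and then control the join of spectral projections by a convergent geometric tail after choosing $i$ so that $A^i/(1-A)\le 1$.

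Two points of difference are worth recording. First, where the paper introduces $y_n=\exp(ts_n-at-bnt)$ and proves the projection comparison $\bigvee\mathbf{1}_{[a+bn,\infty)}(s_n)\le\bigvee\mathbf{1}_{[1,\infty)}(y_n)$ via an operator--Jensen argument, you bypass this entirely with the direct Chernoff--Markov estimate $e^{t(a+bn)}\tau(e_n)\le\tau(e^{ts_n})$; this is shorter and avoids the somewhat informal contradiction argument the paper uses. Second, for \eqref{eq33} the paper reduces to the already-established \eqref{eq32} by the affine substitution $bn\ge \tfrac{m(b+\gamma)}{2}+\tfrac{n(b-\gamma)}{2}$ (valid for $n\ge m$), whereas you rederive it from scratch by re-optimizing in $t$ at $t_0=\tfrac{b+\gamma}{2\lambda}$ and splitting $e^{-t_0b}=e^{-\frac12(b-\gamma)t_0}e^{-\frac12(b+\gamma)t_0}$; both land on the same $A_0$ and the same bound, and your route is arguably more transparent. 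Your closing remark that the geometric-series step requires $A<1$ strictly (so that the borderline $A=1$ in \eqref{eq32} is not covered by either argument) is a valid observation that the paper leaves implicit.
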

\begin{proof}
Let $y_n= \exp(t s_n-at-bnt)$, $t>0$. We show that the sequence $(y_n)_{n\geq0}$ satisfies the following inequality at $t=t_0=\frac{b+\gamma}{\lambda}$.
To this end, note that
\begin{eqnarray*}
\tau(y_n)&=&\tau(\exp(t s_n-at-bnt))\\
&=&\tau(\exp(ts_{n-1}-at-b(n-1)t-bt+tx_n))\\
&\leq&\tau(\exp(ts_{n-1}-at-b(n-1)t)\exp(-bt+tx_n)) \qquad\quad (\text{by} ~\eqref{T2})\\
&=&\tau(\mathcal{E}_{n-1}(\exp(ts_{n-1}-at-b(n-1)t)\exp(-bt+tx_n)))\\
&=&\tau(\exp(ts_{n-1}-at-b(n-1)t)\mathcal{E}_{n-1}(\exp(-bt+tx_n)))\\
&=&\tau(y_{n-1}\mathcal{E}_{n-1}(\exp(-bt+tx_n)))\\
&=&\tau(y_{n-1}\mathcal{E}_{n-1}(e^{-bt}e^{tx_n}))\\
&=&e^{-bt}\tau(y_{n-1}\mathcal{E}_{n-1}(e^{t x_n}))\\
&=&e^{-bt}\tau(y_{n-1}^{\frac{1}{2}}\mathcal{E}_{n-1}(e^{t x_n})y_{n-1}^{\frac{1}{2}})\\
&\leq& e^{-bt}f(t)\tau(y_{n-1})\\
&=& A \tau(y_{n-1})\\
&\leq&\tau(y_{n-1})e^{-(b+\gamma)t+\lambda t^2}
\end{eqnarray*}
if $A = e^{-bt}f(t)$ and $t=t_0=\frac{b+\gamma}{\lambda}$, in which the first and second inequalities follows from (\ref{eq31}).\\
We have $\vee_{n=m+i}^{k}\textbf{1}_{[a+bn,\infty)}(s_n)\preceq\vee_{n=m+i}^{k}\textbf{1}_{[1,\infty)}(y_n)$, for every $k \geq m+i$, in which $m, i$ are positive integers, since
\begin{eqnarray*}
\vee_{n=m+i}^{k}\textbf{1}_{[a+bn,\infty)}(s_n)\wedge(\wedge_{n=m+i}^{k}\textbf{1}_{[0,1)}(y_n))=0.
\end{eqnarray*}
To show this assume that $\xi$  is an unit element in
$$\bigcup_{n=m+i}^k \textbf{1}_{[a+bn,\infty)}(s_n)(\mathfrak{H}) \bigcap \left(\bigcap_{n=m+i}^k \textbf{1}_{[0,1)}(y_n)(\mathfrak{H}) \right).$$
Therefore $\langle s_j \xi, \xi \rangle \geq a + bj$ and $\langle e^{c(s_j-a-bj)} \xi, \xi \rangle < 1$, for some $m+i \leq j \leq k$. By the operator version of the classical Jensen's inequality for the convex function $t \mapsto e^{c(t-a-bj)}$, we get
\begin{eqnarray*}
e^{\langle c(s_j-a-bj)(\xi), \xi \rangle} \leq \langle e^{c(s_j-a-bj)} \xi, \xi \rangle < 1.
\end{eqnarray*}
Consequently, $\langle c(s_j-a-bj)(\xi), \xi \rangle < 0$ and hence $\langle s_j\xi, \xi \rangle < a+ bj$ which gives rise to a contradiction.
Choose $i \in \mathbb{N}$ such that $\frac{A^{i}}{1 - A} \leq 1$.
Hence
\begin{eqnarray*}
\tau(\vee_{n=m+i}^\infty \textbf{1}_{[a+bn,\infty)}(s_n))&\leq &\tau(\vee_{n=m+i}^\infty \textbf{1}_{[1,\infty)}(y_n))\\
& \leq & \sum_{n=m+i}^\infty \tau(y_n)\\
&\leq & \sum_{n=m+i}^\infty A \tau (y_{n-1})\\
& \vdots \\
&\leq & \sum_{n=m+i}^\infty A^{n}\tau(y_0)\\
&\leq & A^m e^{-at_0}
\end{eqnarray*}
for any positive integer $m$, and this ensures (\ref{eq32}).

To prove (\ref{eq33}), let $g(\alpha,n)=m(b-\alpha)+\alpha n$, $n\geq m$, $\alpha\leq b$ and note that $bn\geq g(\alpha,n)$ for every $n\geq m$. A minimization consideration leads to
the choice of $\alpha=\alpha_0=\frac{b-\gamma}{2}$. Thus
\begin{eqnarray*}
\textbf{1}_{[bn,\infty)}(s_n)\leq \textbf{1}_{[\frac{m(b+\gamma)}{2}+\frac{n(b-\gamma)}{2}, \infty)}(s_n)
\end{eqnarray*}
for any $n\geq m$.
From (\ref{eq32}) we infer that
\begin{eqnarray*}
\tau\left(\vee_{n=m+j}^\infty \textbf{1}_{[\frac{m(b+\gamma)}{2}+\frac{n(b-\gamma)}{2},\infty)}(s_n)\right) \leq A_0^m e^{\frac{-m(b+\gamma)^2}{4\lambda}},
\end{eqnarray*}
for some $j \geq 1$, where $A_0=e^{\frac{-1}{2}(b-\gamma)t_0}f(t_0)$ and $t_0=\frac{b+ \gamma}{2\lambda}$.
Hence
\begin{eqnarray*}
\tau\left(\vee_{n=m+j}^\infty \textbf{1}_{[bn,\infty)(s_n)}\right) \leq \tau\left(\vee_{n=m+j}^\infty \textbf{1}_{[\frac{m(b+r)}{2}+\frac{n(b-r)}{2}, \infty)}\right) \leq A_0^m e^{\frac{-m(b+\gamma)^2}{4\lambda}},
\end{eqnarray*}
which implies (\ref{eq33}).
\end{proof}

Note that, in view of the Jensen inequality for conditional expectations in the above Theorem, we lead to the following inequality:
\begin{align*}
\mathcal{E}_{n-1}(x_n) \leq -\gamma.
\end{align*}
This special case have investigated by Khan. Similar to arguments in \cite{KHA}, we may conclude that if $\left\{s_n=\sum_{i=1}^nx_n,~~ n\geq0\right\}$ is a self-adjoint supermartingale with respect to a filtration $(\mathfrak{M}_n,\mathcal{E}_n)_{n\geq0}$ such that $-\alpha \leq x_n \leq \beta$ and $\mathcal{E}_{n-1}(x_n)\leq -\gamma$ ~$(\alpha > \lambda \geq 0, \beta >0)$ for all $n$, then
for positive numbers $a$ and $b$, there exists $i\geq 1$ such that for any positive integer $m$
\begin{eqnarray*}
\tau\left( \vee_{n=m+i}^\infty \textbf{1}_{(a+bn, \infty)} (s_n) \right) \leq A^m e^{\frac{-8a(b+\gamma)}{(\alpha + \beta)^2}},
\end{eqnarray*}
in which $A=e^{-bt_0}f(t_0)$ and $t_0=\frac{8(b+\gamma)}{(\alpha + \beta)^2}$, where $f(t)= e^{-\gamma t}\left( pe^{(\alpha + \beta)tq} + qe^{-(\alpha + \beta)tp}\right)$ and $p = \frac{\alpha - \gamma}{\alpha + \beta}$ and $\frac{\beta + \gamma}{\alpha + \beta}$. Similarly,
\begin{eqnarray*}
\tau\left( \vee_{n=m+j}^\infty \textbf{1}_{(bn, \infty)} (s_n) \right) \leq A_0^m e^{\frac{-2m(b+\gamma)^2}{(\alpha + \beta)^2}}
\end{eqnarray*}
for some $j \geq 1$, where $A_0 =e^{-\frac{1}{2}(b-\gamma)t_0}f(t_0)$ and $t_0=\frac{4(b+ \gamma)}{(\alpha + \beta)^2}$.


\begin{corollary} (Noncommutative Blackwell--Ross inequality)\label{main1}
Let $x=(x_j)_{0\leq j\leq n}$ be a self-adjoint martingale in $\mathfrak{M}$ with respect to a filtration $(\mathfrak{M}_j, \mathcal{E}_j)_{0\leq j\leq n}$ with $x_0=0$ and $dx_j=x_j-x_{j-1}$ be its associated martingale difference. Assume that $-\alpha\leq dx_j\leq \beta$ for some positive constants $\alpha, \beta\,\, (1\leq j\leq n)$. Then for any positive values $a,b$, there exists $i\geq 1$ such that for any positive integer $m$
\begin{eqnarray*}
\tau\left(\vee_{n=m+i}^{\infty}\textbf{1}_{[a+bn,\infty)}(x_n)\right)\leq A^m \exp\left\{\frac{-8 ab}{(\alpha+\beta)^2}\right\},
\end{eqnarray*}
where
\begin{eqnarray*}
A=\frac{\beta}{\alpha+\beta}\exp\left\{\frac{-8b(b+\alpha)}{(\alpha+\beta)^2}\right\}+
\frac{\alpha}{\alpha+\beta}\exp\left\{\frac{-8b(b-\beta)}{(\alpha+\beta)^2}\right\}\leq 1.
\end{eqnarray*}
Moreover,
\begin{eqnarray*}
\tau(\wedge_{m=1}^{\infty}\vee_{n=m}^{\infty}\textbf{1}_{[1,\infty)}(x_n))=\lim_{m\to\infty}\tau(\vee_{n=m}^{\infty}\textbf{1}_{[1,\infty)}(x_n))=0
\end{eqnarray*}
\end{corollary}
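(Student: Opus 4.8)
The plan is to obtain this statement as a specialization of Theorem~\ref{main2}. Writing $x_n=\sum_{j=1}^{n}dx_j$ (recall $x_0=0$), the sequence $(x_n)$ is a martingale, hence in particular a self-adjoint supermartingale for $(\mathfrak{M}_j,\mathcal{E}_j)$, so it plays the role of ``$s_n$'' in Theorem~\ref{main2} with ``increments'' $dx_j$. The whole task is therefore to exhibit a continuous positive function $f$ on $[0,\infty)$ and constants $\gamma\ge 0$, $\lambda>0$ for which the subnormality hypothesis \eqref{eq31} holds; the martingale identity $\mathcal{E}_{n-1}(dx_n)=0$ will force $\gamma=0$, and the bound $-\alpha\le dx_j\le\beta$ will give $\lambda=(\alpha+\beta)^2/8$.

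The key step is the operator estimate $\mathcal{E}_{n-1}(e^{t\,dx_n})\le \frac{\beta}{\alpha+\beta}e^{-\alpha t}+\frac{\alpha}{\alpha+\beta}e^{\beta t}=:f(t)$ for $t>0$. Since the spectrum of $dx_n$ is contained in $[-\alpha,\beta]$ and the scalar function $s\mapsto e^{ts}$ is convex, the chord inequality $e^{ts}\le \frac{\beta-s}{\alpha+\beta}e^{-\alpha t}+\frac{s+\alpha}{\alpha+\beta}e^{\beta t}$ holds for every $s\in[-\alpha,\beta]$; by the spectral theorem this upgrades to the operator inequality $e^{t\,dx_n}\le \frac{\beta-dx_n}{\alpha+\beta}e^{-\alpha t}+\frac{dx_n+\alpha}{\alpha+\beta}e^{\beta t}$, and here there is no operator-convexity subtlety because the right-hand side is affine in $dx_n$. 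Applying the normal positive unital linear map $\mathcal{E}_{n-1}$ and using $\mathcal{E}_{n-1}(dx_n)=0$ annihilates the linear term and leaves $\mathcal{E}_{n-1}(e^{t\,dx_n})\le f(t)$. Finally, Lemma~\ref{IPE} applied with parameter $\frac{\alpha}{\alpha+\beta}\in[0,1]$ and argument $(\alpha+\beta)t$ reads precisely $f(t)\le e^{(\alpha+\beta)^2 t^2/8}$, so \eqref{eq31} holds with $f$ as above, $\gamma=0$, and $\lambda=(\alpha+\beta)^2/8$, and $f$ is plainly continuous and positive on $[0,\infty)$.

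Now I would simply quote Theorem~\ref{main2}. With these values, $t_0=(b+\gamma)/\lambda=8b/(\alpha+\beta)^2$, the exponential factor $e^{-a(b+\gamma)/\lambda}$ becomes $e^{-8ab/(\alpha+\beta)^2}$, and $A=e^{-bt_0}f(t_0)=\frac{\beta}{\alpha+\beta}e^{-8b(b+\alpha)/(\alpha+\beta)^2}+\frac{\alpha}{\alpha+\beta}e^{-8b(b-\beta)/(\alpha+\beta)^2}$, which is the constant displayed in the statement. That $A\le 1$ follows from $f(t_0)\le e^{\lambda t_0^2}$ together with $\lambda t_0^2=b t_0$ (both equal $b^2/\lambda$ since $\gamma=0$), giving $A\le e^{-bt_0+\lambda t_0^2}=1$; moreover $A<1$ because the inequality of Lemma~\ref{IPE} is strict at the nonzero argument $(\alpha+\beta)t_0$. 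This yields the first displayed estimate of the corollary.

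For the concluding line, set $q_m=\vee_{n=m}^{\infty}\textbf{1}_{[1,\infty)}(x_n)$. These projections decrease in $m$, so $\wedge_{m}q_m$ is their strong-operator limit and, $\tau$ being normal, $\tau(\wedge_m q_m)=\lim_m\tau(q_m)$ automatically; it then remains to check that this limit is $0$, which follows by letting $m\to\infty$ in the tail bound just obtained (equivalently in \eqref{eq33} with $\gamma=0$), the right-hand side tending to $0$ since $A<1$. I expect no genuine obstacle here: the only places that demand care are the passage from the scalar chord inequality to the operator inequality via functional calculus, and the bookkeeping reconciling the constants $t_0$ and $A$ of Theorem~\ref{main2} with those in the statement.
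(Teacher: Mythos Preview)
Your argument for the main inequality is essentially identical to the paper's: both use the chord inequality for the convex scalar function $s\mapsto e^{ts}$ on $[-\alpha,\beta]$, pass to the operator inequality by functional calculus, apply $\mathcal{E}_{n-1}$ together with $\mathcal{E}_{n-1}(dx_n)=0$, invoke Lemma~\ref{IPE} with parameter $\alpha/(\alpha+\beta)$ and argument $(\alpha+\beta)t$, and then specialize Theorem~\ref{main2} with this $f$, $\gamma=0$, $\lambda=(\alpha+\beta)^2/8$. Your explicit computation of $t_0$ and $A$ and the observation that $A<1$ strictly are more detailed than what the paper writes, but the route is the same.

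One caveat concerns the ``Moreover'' clause. You claim the limit $\lim_{m}\tau\big(\vee_{n\ge m}\textbf{1}_{[1,\infty)}(x_n)\big)=0$ follows by letting $m\to\infty$ in the tail bound just established (or in \eqref{eq33}). But those bounds control $\tau\big(\vee_{n}\textbf{1}_{[a+bn,\infty)}(x_n)\big)$ and $\tau\big(\vee_{n}\textbf{1}_{[bn,\infty)}(x_n)\big)$, and for large $n$ one has $[a+bn,\infty)\subset[1,\infty)$, so the spectral projections satisfy $\textbf{1}_{[a+bn,\infty)}(x_n)\le \textbf{1}_{[1,\infty)}(x_n)$; the inequality runs the wrong way to dominate $q_m=\vee_{n\ge m}\textbf{1}_{[1,\infty)}(x_n)$. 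So this step does not go through as written. The paper's own proof does not address this clause either, so the gap is not one you could have closed by mimicking the published argument.
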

\begin{proof}
Note that $x_n = \sum_{k=1}^n dx_k$ for all $n$. Let $t>0$. The function $s\mapsto e^{ts}$ is convex, therefore for any $-\alpha\leq s\leq \beta$,
\begin{eqnarray*}
e^{st}\leq e^{t\beta}\frac{s+\alpha}{\alpha+\beta}+e^{-t\alpha}\frac{\beta-s}{\alpha+\beta}\,.
\end{eqnarray*}
Since $-\alpha\leq dx_j\leq \beta$, by the functional calculus, we have
\begin{eqnarray*}
e^{tdx_j}\leq e^{t\beta}\frac{dx_j+\alpha}{\alpha+\beta}+e^{-t\alpha}\frac{\beta-dx_j}{\alpha+\beta}\,.
\end{eqnarray*}
Since $\mathcal{E}_{j-1}$ is a positive map and $\mathcal{E}_{j-1}(dx_j)=0$, we reach
\begin{eqnarray*}
\mathcal{E}_{j-1}(e^{tdx_j})\leq e^{t\beta}\frac{\alpha}{\alpha+\beta}+e^{-t\alpha}\frac{\beta}{\alpha+\beta} \leq  e^{\frac{t^2(\alpha+\beta)^2}{8}},
\end{eqnarray*}
where the second inequality is deduced from Lemma \ref{IPE}  with $\lambda=\frac{\alpha}{\alpha+\beta}$ and $x=c(\alpha+\beta)$.
Hence the desired result can be deduced from Theorem \ref{main2} with $f(t) = e^{t\beta}\frac{\alpha}{\alpha+\beta}+e^{-t\alpha}\frac{\beta}{\alpha+\beta}$, $\gamma = 0$ and $\lambda = \frac{(\alpha+\beta)^2}{8}$.
\end{proof}
The authors of  \cite{SM1, SM2} proved a noncommutative Azuma-type inequality for noncommutative martingales in noncommutative probability spaces, and as applications, the authors obtained a noncommutative Heoffding inequality.
In the next corollary we give a noncommutative Blackwell inequality from which we deduce an extension of commutative Azuma-type inequality. One may regard the following conclusion as a stronger result than the noncommutative Azuma-type inequality.
\begin{corollary}\label{Cor1}
Let $x=(x_j)_{0\leq j\leq n}$ be a self-adjoint martingale in $\mathfrak{M}$ with respect to a filtration $(\mathfrak{M}_j, \mathcal{E}_j)_{0\leq j\leq n}$ and $dx_j=x_j-x_{j-1}$ be its associated martingale difference. Assume that $-\alpha\leq dx_j\leq \beta$ for some nonnegative constants $\alpha, \beta>0\,\, (1\leq j\leq n)$. Then for any positive value $c$, there exists $i\geq 1$ such that for any positive integer $m$
\begin{eqnarray*}
\tau\left( \vee_{n=m+i}^\infty \textbf{1}_{(cn, \infty)} (x_n) \right) \leq B^m \exp\left\{\frac{-2 mc^2}{(\alpha+\beta)^2}\right\},
\end{eqnarray*}
where
\begin{eqnarray*}
B=\frac{\beta}{\alpha+\beta}\exp\left\{\frac{-2c(c+2\alpha)}{(\alpha+\beta)^2}\right\}+
\frac{\alpha}{\alpha+\beta}\exp\left\{\frac{-2c(c-2\beta)}{(\alpha+\beta)^2}\right\}\leq 1.
\end{eqnarray*}
\end{corollary}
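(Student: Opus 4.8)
The plan is to specialize the second estimate \eqref{eq33} of Theorem \ref{main2} to the martingale setting, in the same way that \eqref{eq32} was specialized in the proof of Corollary \ref{main1}. Since $x_0 = 0$, we have $x_n = \sum_{j=1}^n dx_j$, so $(x_n)_{n\ge 0}$ is a self-adjoint martingale (hence a self-adjoint supermartingale) whose increments $dx_j$ play the role of the summands $x_n$ in Theorem \ref{main2}.

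First I would reproduce the opening computation from the proof of Corollary \ref{main1}: convexity of $s\mapsto e^{ts}$ on $[-\alpha,\beta]$ together with the functional calculus gives $e^{tdx_j}\le e^{t\beta}\frac{dx_j+\alpha}{\alpha+\beta}+e^{-t\alpha}\frac{\beta-dx_j}{\alpha+\beta}$, and applying the positive map $\mathcal{E}_{j-1}$ and using $\mathcal{E}_{j-1}(dx_j)=0$ yields
\begin{eqnarray*}
\mathcal{E}_{j-1}(e^{tdx_j})\le e^{t\beta}\frac{\alpha}{\alpha+\beta}+e^{-t\alpha}\frac{\beta}{\alpha+\beta}=:f(t)\le e^{\frac{t^2(\alpha+\beta)^2}{8}}\qquad(t>0),
\end{eqnarray*}
the last inequality being Lemma \ref{IPE} with $\lambda=\frac{\alpha}{\alpha+\beta}$ and $x=t(\alpha+\beta)$. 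Thus hypothesis \eqref{eq31} of Theorem \ref{main2} holds with this $f$, with $\gamma=0$ (for a martingale the increments have vanishing conditional expectation, so $\gamma=0$ is the only admissible choice in the language of the Remark following Theorem \ref{main2}), and with $\lambda=\frac{(\alpha+\beta)^2}{8}$.

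Next I would invoke \eqref{eq33} with $b=c$, renaming the index $j$ appearing there to $i$, to get $\tau\big(\vee_{n=m+i}^\infty \textbf{1}_{(cn,\infty)}(x_n)\big)\le A_0^m e^{-mc^2/(4\lambda)}$ with $A_0=e^{-\frac12 c t_0}f(t_0)$ and $t_0=\frac{c}{2\lambda}$. It remains only to substitute $\lambda=\frac{(\alpha+\beta)^2}{8}$: this gives $t_0=\frac{4c}{(\alpha+\beta)^2}$, whence $\frac{-mc^2}{4\lambda}=\frac{-2mc^2}{(\alpha+\beta)^2}$ and
\begin{eqnarray*}
A_0=e^{-\frac{c}{2}t_0}\Big(e^{t_0\beta}\tfrac{\alpha}{\alpha+\beta}+e^{-t_0\alpha}\tfrac{\beta}{\alpha+\beta}\Big)=\frac{\beta}{\alpha+\beta}\exp\Big\{\tfrac{-2c(c+2\alpha)}{(\alpha+\beta)^2}\Big\}+\frac{\alpha}{\alpha+\beta}\exp\Big\{\tfrac{-2c(c-2\beta)}{(\alpha+\beta)^2}\Big\},
\end{eqnarray*}
which is exactly the constant $B$ of the statement. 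Finally, $B=A_0\le 1$ follows from the case $\gamma=0$ of \eqref{eq31}, namely $f(t_0)\le e^{\lambda t_0^2}$: with $t_0=\frac{c}{2\lambda}$ one gets $A_0\le e^{-\frac12 c t_0+\lambda t_0^2}=e^{0}=1$.

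Since the argument is a pure specialization of Theorem \ref{main2}\,\eqref{eq33}, there is essentially no obstacle apart from careful bookkeeping of the exponents. The only point that genuinely warrants a second look is whether any additional restriction is needed --- for instance the constraint $\alpha>\lambda$ from the Remark after Theorem \ref{main2} --- and it is not, because that remark treats a different parametrization while Theorem \ref{main2} itself requires only $\lambda>0$ and $\gamma\ge 0$, both of which hold here.
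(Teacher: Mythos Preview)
Your proof is correct and amounts to the same argument as the paper's. The only organizational difference is that you invoke estimate \eqref{eq33} of Theorem~\ref{main2} directly with $b=c$, while the paper instead passes through Corollary~\ref{main1} (the \eqref{eq32} specialization) with $a=\tfrac{mc}{2}$, $b=\tfrac{c}{2}$ and then uses $\mathbf{1}_{[nc,\infty)}(x_n)\le\mathbf{1}_{[\frac{mc}{2}+\frac{nc}{2},\infty)}(x_n)$ for $n\ge m$; since this is precisely how \eqref{eq33} is obtained from \eqref{eq32} in the proof of Theorem~\ref{main2} (with $\gamma=0$), the two routes are identical in content.
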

\begin{proof}
For $a=\frac{mc}{2}$ and $b=\frac{c}{2}$, it follows form Corollary \ref{main1} that
\begin{eqnarray}\label{IE2}
\tau\left(\vee_{n=m+i}^\infty \mathbf{1}_{[\frac{mc}{2}+\frac{nc}{2}, \infty)}(x_n)\right)\leq B^m \exp\left\{\frac{-8 mc^2}{4(\alpha+\beta)^2}\right\}
\end{eqnarray}
for some $i \geq 1$. Moreover, we have
\begin{eqnarray}\label{IE3}
\mathbf{1}_{[nc,\infty)}(x_n)\leq\mathbf{1}_{[\frac{mc}{2}+\frac{nc}{2}, \infty)}(x_n)
\end{eqnarray}
for every $n\in \mathbb{N}$.
Hence the result is deduced from (\ref{IE2}) and (\ref{IE3}).
\end{proof}
\begin{corollary}[Azuma-type inequality]
Let $Z_n$, ~ $n \geq 0$ be a martingale sequence of bounded random variables with respect to a filtration $(\mathcal{F}_n, \mathbb{E}_n)_{n\geq 1}$ on a probability space $(\Omega, \mathcal{F}, \mathbb{P})$ with $Z_0 = 0$. If $-\alpha \leq dZ_n \leq \alpha$ for all $n$, then for each $c > 0$ there exists $i\geq 1$ such that for any positive integer $m$
\begin{equation*}
\mathbb{P}(Z_n \geq nc {\rm ~for~some~} n\geq m+i)\leq \exp\{\frac{-mc^2}{2 \alpha^2}\}.
\end{equation*}
\end{corollary}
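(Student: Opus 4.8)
The plan is to obtain this as the commutative specialization of Corollary \ref{Cor1}. First I would realize the probabilistic data inside a noncommutative probability space: take $\mathfrak{M}=L^\infty(\Omega,\mathcal{F},\mathbb{P})$ acting by multiplication on $\mathfrak{H}=L^2(\Omega,\mathcal{F},\mathbb{P})$, with tracial state $\tau(\cdot)=\mathbb{E}(\cdot)=\int_\Omega\cdot\,d\mathbb{P}$, filtration $\mathfrak{M}_n=L^\infty(\Omega,\mathcal{F}_n,\mathbb{P})$, and conditional expectations $\mathcal{E}_n=\mathbb{E}(\cdot\mid\mathcal{F}_n)$. Since the $Z_n$ are bounded and real valued, $x=(Z_n)_n$ is a self-adjoint martingale in $\mathfrak{M}$ in the sense used above, with martingale difference $dZ_n$ satisfying $-\alpha\leq dZ_n\leq\alpha$; hence Corollary \ref{Cor1} applies with $\beta=\alpha$.

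Next I would substitute $\beta=\alpha$ into the conclusion of Corollary \ref{Cor1}. The main exponent becomes $\frac{-2mc^2}{(\alpha+\beta)^2}=\frac{-mc^2}{2\alpha^2}$, and, using $\frac{-2c(c\pm2\alpha)}{(2\alpha)^2}=-\frac{c^2}{2\alpha^2}\mp\frac{c}{\alpha}$, the constant simplifies to
\begin{align*}
B=\tfrac12 e^{-c^2/(2\alpha^2)}\bigl(e^{-c/\alpha}+e^{c/\alpha}\bigr)=e^{-c^2/(2\alpha^2)}\cosh(c/\alpha).
\end{align*}
The one scalar fact worth recording is $B\leq1$: with $u=c/\alpha$, the elementary inequality $\cosh u=\sum_{k\geq0}\frac{u^{2k}}{(2k)!}\leq\sum_{k\geq0}\frac{u^{2k}}{2^k k!}=e^{u^2/2}$ (valid because $(2k)!\geq2^k k!$) yields $\cosh(c/\alpha)\leq e^{c^2/(2\alpha^2)}$, so $B\leq1$ and therefore $B^m\leq1$ for every positive integer $m$.

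Finally I would translate back into probabilistic language. In the abelian algebra $\mathfrak{M}$ the spectral projection $\mathbf{1}_{(cn,\infty)}(Z_n)$ is multiplication by the indicator $\mathbf{1}_{\{Z_n>cn\}}$, the lattice supremum $\vee_{n=m+i}^\infty$ of these projections is multiplication by $\mathbf{1}_{\bigcup_{n\geq m+i}\{Z_n>cn\}}$, and $\tau$ of this projection is exactly $\mathbb{P}\bigl(Z_n>cn\text{ for some }n\geq m+i\bigr)$; the closed-halfline version of Corollary \ref{Cor1} (which is what the proofs of Theorem \ref{main2} and Corollary \ref{Cor1} actually establish, since they work with $\mathbf{1}_{[\,\cdot\,,\infty)}$) gives the same bound for $\{Z_n\geq cn\}$. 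Combining with $B^m\leq1$, for the same $i\geq1$ supplied by Corollary \ref{Cor1} and any positive integer $m$,
\begin{align*}
\mathbb{P}\bigl(Z_n\geq nc\text{ for some }n\geq m+i\bigr)\leq B^m e^{-mc^2/(2\alpha^2)}\leq e^{-mc^2/(2\alpha^2)},
\end{align*}
which is the assertion. There is no genuine obstacle here: all the analytic content sits in Corollary \ref{Cor1}, and the only thing to verify by hand is the scalar inequality $\cosh u\leq e^{u^2/2}$ that forces $B\leq1$; the remainder is the routine identification, in the commutative case, of spectral projections with indicator functions and of $\tau$ with $\mathbb{P}$.
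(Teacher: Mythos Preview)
Your proposal is correct and follows exactly the route the paper intends: the paper's own proof is the single line ``It immediately follows from Corollary~\ref{Cor1},'' and you have simply spelled out the commutative specialization (with $\beta=\alpha$), the arithmetic in the exponent, and the translation of $\tau(\vee\,\mathbf{1})$ into $\mathbb{P}(\exists n)$. Your explicit check that $B=e^{-c^2/(2\alpha^2)}\cosh(c/\alpha)\le1$ via $\cosh u\le e^{u^2/2}$ is a nice addition, though the paper already records $B\le1$ as part of the statement of Corollary~\ref{Cor1} (inherited from $A\le1$ in Theorem~\ref{main2}).
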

\begin{proof}
It immediately follows from Corollary \ref{Cor1}.
\end{proof}

Now we can state a version of classical Blackwell-Ross supermartingale inequality as follows; cf. \cite{KHA}.
\begin{corollary}
Let $\left\{S_n=\sum_{i=1}^n X_n,~~ n\geq0\right\}$ be a supermartingale of bounded random variables with respect to a filtration $(\mathcal{F}_n, \mathbb{E}_n)_{n=1}^N$ on a probability space $(\Omega, \mathcal{F}, \mathbb{P})$ such that
\begin{eqnarray*}
\mathbb{E}_{n-1}(e^{t X_n})\leq f(t)\leq e^{-\gamma t+\lambda t^2} \quad (\lambda>0, \gamma\geq0, t>0),
\end{eqnarray*}
where $f(t)$ is a positive continuous function. Then for positive numbers $a$ and $b$, there exists $i\geq 1$ such that for any positive integer $m$,
\begin{eqnarray*}
\mathbb{P}\left(S_n\geq a+bn {\rm ~for~some~} n\geq m+i\right) \leq A^m e^{\frac{-a(b+\gamma)}{\lambda}},
\end{eqnarray*}
where $A=e^{-bt_0}f(t_0) < 1$ and $t_0=\frac{b+\gamma}{\lambda}$. Moreover,
\begin{eqnarray*}
\mathbb{P}\left(S_n\geq bn {\rm ~for~some~} n\geq m+j\right) \leq A_0^m e^{\frac{-m(b+\gamma)^2}{4\lambda}}
\end{eqnarray*}
for some $j \geq 1$, where $A_0 =e^{-\frac{1}{2}(b-\gamma)t_0}f(t_0)$ and $t_0=\frac{b+ \gamma}{2\lambda}$.
\end{corollary}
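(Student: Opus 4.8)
The plan is to obtain this statement as the commutative specialization of Theorem~\ref{main2}. First I would recast the classical setting as a noncommutative probability space: put $\mathfrak{M}=L^\infty(\Omega,\mathcal{F},\mathbb{P})$, acting by pointwise multiplication on $\mathfrak{H}=L^2(\Omega,\mathcal{F},\mathbb{P})$, and let $\tau(g)=\mathbb{E}(g)=\int_\Omega g\,d\mathbb{P}$; since $\mathfrak{M}$ is abelian, $\tau$ is automatically a normal faithful tracial state. The sub-$\sigma$-algebras $\mathcal{F}_n$ give the von Neumann subalgebras $\mathfrak{M}_n=L^\infty(\Omega,\mathcal{F}_n,\mathbb{P})$, and by the uniqueness clause in the definition of the operator-algebraic conditional expectation the maps $\mathcal{E}_n$ onto $\mathfrak{M}_n$ coincide with the classical $\mathbb{E}_n$, because $\mathbb{E}_n$ satisfies the module identity $\mathbb{E}_n(gXh)=g\,\mathbb{E}_n(X)\,h$ for $\mathcal{F}_n$-measurable $g,h$ and is $\tau$-preserving. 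Boundedness of the $X_n$ gives $S_n\in\mathfrak{M}$; each $S_n$ is real-valued, hence self-adjoint, and $\mathbb{E}_{n-1}(S_n)\leq S_{n-1}$ says exactly that $(S_n)$ is a self-adjoint supermartingale. (If the filtration is genuinely finite one simply takes $\mathfrak{M}$ to be the von Neumann algebra generated by $\bigcup_n\mathfrak{M}_n$, so that the $w^*$-density requirement is automatic.)

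Second, the hypothesis $\mathbb{E}_{n-1}(e^{tX_n})\leq f(t)\leq e^{-\gamma t+\lambda t^2}$ is precisely condition \eqref{eq31} with the prescribed $f,\gamma,\lambda$, so Theorem~\ref{main2} applies verbatim; it remains to translate its conclusion. In the abelian algebra $\mathfrak{M}$ the spectral projection $\mathbf{1}_{[a+bn,\infty)}(S_n)$ is the characteristic function $\chi_{E_n}$ of the event $E_n=\{S_n\geq a+bn\}$, and since the projection lattice of a commutative von Neumann algebra is the Boolean algebra of measurable sets modulo null sets, the join $\vee_{n=m+i}^\infty\mathbf{1}_{[a+bn,\infty)}(S_n)$ equals $\chi_{\bigcup_{n\geq m+i}E_n}$. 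Applying $\tau$,
\[
\tau\left(\vee_{n=m+i}^\infty\mathbf{1}_{[a+bn,\infty)}(S_n)\right)=\mathbb{P}\left(\bigcup_{n\geq m+i}E_n\right)=\mathbb{P}\left(S_n\geq a+bn\ \text{for some}\ n\geq m+i\right),
\]
and likewise for the events $\{S_n\geq bn\}$. Feeding this identification into \eqref{eq32} and \eqref{eq33} produces the two displayed estimates, with the stated constants $A,A_0$ and the stated $t_0$.

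There is essentially no genuine obstacle here: all of the analytic content is carried by Theorem~\ref{main2}, and the only points that deserve a word of justification are the identification $\mathcal{E}_n=\mathbb{E}_n$ (immediate from uniqueness of the conditional expectation) and the identification of the lattice join of spectral projections with the union of the corresponding events (a standard feature of the commutative picture). The one quantitative point worth keeping an eye on is that the constants $A$ and $A_0$ are \emph{strictly} below $1$, since this is what makes the selection of $i$ (respectively $j$) in the proof of Theorem~\ref{main2} legitimate; this is recorded in the statement and is the strictness inherited from the bound $f(t)\leq e^{-\gamma t+\lambda t^2}$ together with $b>0$.
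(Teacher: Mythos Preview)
Your proposal is correct and matches the paper's intended approach: the paper states this corollary without proof, treating it as the immediate commutative specialization of Theorem~\ref{main2}, which is exactly what you carry out (with considerably more care in identifying $L^\infty(\Omega,\mathcal{F},\mathbb{P})$ as the noncommutative probability space, $\mathbb{E}_n$ with $\mathcal{E}_n$, and joins of spectral projections with unions of events). There is nothing to add.
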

\begin{corollary}
Let $\mathfrak{N}\subseteq\mathfrak{A}_j(\subseteq\mathfrak{M})$ be order independent over $\mathfrak{N}$. Let $x_j\in\mathfrak{A}_j$ be self-adjoint such that $\mathcal{E}_{\mathfrak{N}}(x_j)\leq0$ and
\begin{eqnarray*}
\mathcal{E}_{n-1}(e^{t x_n})\leq f(t)\leq e^{-\gamma t+\lambda t^2}, \quad (\gamma\geq0, \lambda>0, t>0),
\end{eqnarray*}
where $f(t)$ is a continuous positive function on $[0,\infty)$ such that $f(0)=1$. Then for positive numbers $a$ and $b$, there exists $i\geq 1$ such that for any positive integer $m$
\begin{eqnarray*}
\tau \left(\vee_{n=m+i}^\infty \textbf{1}_{[a+bn, \infty)} (s_n) \right)\leq A^m e^{\frac{-a(b+\gamma)}{\lambda}}.
\end{eqnarray*}

\end{corollary}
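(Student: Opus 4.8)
The plan is to observe that the hypotheses are arranged precisely so that $\left(s_n=\sum_{i=1}^{n}x_i\right)_{n\geq 0}$ is a self-adjoint supermartingale for the filtration generated by the algebras $\mathfrak{A}_j$, and then to invoke Theorem \ref{main2} essentially verbatim.

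First I would set up the filtration. For $n\geq 1$ let $\mathfrak{M}_n$ be the von Neumann subalgebra of $\mathfrak{M}$ generated by $\mathfrak{A}_1,\dots,\mathfrak{A}_n$, put $\mathfrak{M}_0=\mathfrak{N}$, and let $\mathcal{E}_n$ be the unique $\tau$-preserving conditional expectation of $\mathfrak{M}$ onto $\mathfrak{M}_n$. By construction $\mathfrak{M}_{n-1}$ is exactly the algebra appearing in the definition of order independence over $\mathfrak{N}$, so the symbol $\mathcal{E}_{n-1}$ used there is the same map as here. If $\bigcup_n\mathfrak{M}_n$ is not $w^{*}$-dense in $\mathfrak{M}$, one replaces $\mathfrak{M}$ by the von Neumann subalgebra generated by $\bigcup_j\mathfrak{A}_j$ together with the restriction of $\tau$, which is still a normal faithful tracial state and already contains every $s_n$ and every spectral projection $\textbf{1}_{[a+bn,\infty)}(s_n)$; so we may assume $(\mathfrak{M}_n,\mathcal{E}_n)_{n\geq 0}$ is a filtration in the sense of Section 1.

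Next I would check the two structural requirements of Theorem \ref{main2}. Each $s_n$ is self-adjoint and belongs to $\mathfrak{M}_n$. Since $x_n\in\mathfrak{A}_n$, order independence over $\mathfrak{N}$ gives $\mathcal{E}_{n-1}(x_n)=\mathcal{E}_{\mathfrak{N}}(x_n)\leq 0$ by hypothesis, so, using that $s_{n-1}\in\mathfrak{M}_{n-1}$,
\[
\mathcal{E}_{n-1}(s_n)=\mathcal{E}_{n-1}(s_{n-1}+x_n)=s_{n-1}+\mathcal{E}_{n-1}(x_n)\leq s_{n-1},
\]
so $(s_n)$ is a self-adjoint supermartingale. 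The remaining hypothesis $\mathcal{E}_{n-1}(e^{t x_n})\leq f(t)\leq e^{-\gamma t+\lambda t^{2}}$ for $t>0$, with $f$ continuous and positive on $[0,\infty)$, is literally condition \eqref{eq31}; the normalization $f(0)=1$ is in any case forced, since $1=\mathcal{E}_{n-1}(e^{0\cdot x_n})\leq f(0)\leq e^{0}=1$. Theorem \ref{main2} then yields an $i\geq 1$ such that for every positive integer $m$ one has $\tau\!\left(\vee_{n=m+i}^{\infty}\textbf{1}_{[a+bn,\infty)}(s_n)\right)\leq A^{m}e^{-a(b+\gamma)/\lambda}$ with $A=e^{-bt_0}f(t_0)$ and $t_0=(b+\gamma)/\lambda$, which is exactly the assertion.

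I do not expect a genuine obstacle here: the corollary is an application rather than a new estimate, and its only delicate point is the bookkeeping identification of the two occurrences of $\mathcal{E}_{n-1}$ — the one in the definition of order independence (conditional expectation onto the algebra generated by $\mathfrak{A}_1,\dots,\mathfrak{A}_{n-1}$) and the one in the filtration $(\mathfrak{M}_n,\mathcal{E}_n)$ — which are the same map by our choice of $\mathfrak{M}_n$, making the step $\mathcal{E}_{n-1}(x_n)=\mathcal{E}_{\mathfrak{N}}(x_n)$ immediate. The conceptual content is simply that ``order independence over a common core $\mathfrak{N}$'' together with ``$\mathcal{E}_{\mathfrak{N}}(x_j)\leq 0$'' is a natural algebraic situation in which the abstract hypothesis \eqref{eq31} of Theorem \ref{main2} is met.
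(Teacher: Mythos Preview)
Your proposal is correct and follows essentially the same approach as the paper: build the filtration from the von Neumann algebras generated by $\mathfrak{A}_1,\dots,\mathfrak{A}_j$ with $\mathfrak{M}_0=\mathfrak{N}$, use order independence to get $\mathcal{E}_{j-1}(x_j)=\mathcal{E}_{\mathfrak{N}}(x_j)\leq 0$ and hence the supermartingale property, and then invoke Theorem~\ref{main2}. Your version is in fact more careful than the paper's (you address the $w^*$-density of $\bigcup_n\mathfrak{M}_n$ and make explicit the identification of the two uses of $\mathcal{E}_{n-1}$), but the substance is the same.
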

\begin{proof}
Let $\mathfrak{M}_0=\mathfrak{N}$ and $\mathcal{E}_0=\mathcal{E}_\mathfrak{N}$. For every $1\leq j\leq n$, let $\mathfrak{M}_j$ be the von Neumann subalgebra generated by $\mathfrak{A}_1,\ldots,\mathfrak{A}_{j-1}$ and $\mathcal{E}_j$ be the corresponding conditional expectation. Put $s_0:=0$ and $s_j:=\sum_{k=1}^jx_k$ for $1\leq j\leq n$. Then
$$\mathcal{E}_{j-1}(s_j)=\sum_{k=1}^{j-1}x_k+\mathcal{E}_{j-1}(x_k)=
\sum_{k=1}^{j-1}x_k+\mathcal{E}_{\mathfrak{N}}(x_k)\leq s_{j-1}.$$
It follows that $(s_j)_{0\leq j\leq n}$ is a supermartingale with respect to the filtration $(\mathfrak{M}_j, \mathcal{E}_j)_{0\leq j\leq n}$. Hence, the result follows via
$\left\{s_n=\sum_{i=1}^nx_n,~~ n\geq0\right\}$ in Theorem \ref{main2}.
\end{proof}
\bigskip

\textbf{Acknowledgement.} The authors are grateful to Prof. Fedor Sukochev for his useful comments.

\end{document}